\definecolor{linkcolor}{HTML}{970505}
\definecolor{urlcolor}{HTML}{799B03} 
\definecolor{citecolor}{HTML}{799B03}
\DeclareMathOperator{\Hess}{Hess}
\DeclareMathOperator{\Span}{span}
\let\Re\barbaz
\let\Im\quuuz
\DeclareMathOperator{\Re}{Re}
\DeclareMathOperator{\Im}{Im}
\DeclareMathOperator{\grad}{grad}
\DeclareMathOperator{\dive}{div}
\theoremstyle{definition}
\newtheorem{theoremf}{Theorem}[section] 
\newtheorem{def1}[theoremf]{Definition}
\newtheorem{remark}[theoremf]{Remark}
\newtheorem{propositionf}[theoremf]{Proposition}
\newtheorem{example}[theoremf]{Example}
\newtheorem*{lemmaf*}{Лемма}
\newtheorem*{theoremf*}{Теорема}
\newtheorem*{remark*}{Замечание}
\newtheorem*{statementf*}{Утверждение}
\newtheorem*{propositionf*}{Предложение}
\newtheorem*{problem*}{Задача}
\newtheorem*{example*}{Пример}
\newtheorem*{notation*}{Обозначение}
\newcommand{%
    
    \import{./figures/}{.pdf_tex}
}[2][1]{%
    
    \import{./figures/}{#2.pdf_tex}
}
\begin{document}
	
\title{Minimal submanifolds in spheres and complex-valued eigenfunctions}
\author[1]{Aleksei Kislitsyn}

\address{Department of Higher Geometry and Topology, Faculty
of Mathematics and Mechanics, Moscow State University, Leninskie Gory,
GSP-1, 119991, Moscow, Russia}
\address{Independent University of Moscow, Bolshoy
Vlasyevskiy Pereulok 11, 119002, Moscow, Russia}
\email{aleksei.kislitcyn@math.msu.ru}

\begin{abstract}
A new approach for constructing minimal submanifolds of codimension 1 in the round spheres  is proposed.  In the case of  $\mathbb{S}^3$ two immersions of the Clifford  torus and all Lawson $\tau_{n, m}$ surfaces are  described in terms of $(\lambda, \mu)$-eigenfunctions. Also, a new proof of a theorem that describes $(\lambda, \mu)$-eigenfunctions on sphere is obtained. This proof is based on a statement that a function $f$ is a $(\lambda, \mu)$-eigenfunction if and only if $f$ and $f^2$ are eigenfunctions for the Laplace-Beltrami operator. 
\end{abstract}

\maketitle

\section{Introduction}

Minimal surfaces and minimal submanifolds are very classic objects of study in Differential Geometry, see 
e.g.~\cite{DierkesHildebrandtSauvigny2010, DierkesHildebrandtTromba2010, DierkesHildebrandtTromba2010a, Xin2003}.
During last 18 years a special case of minimal submanifolds in the spheres $\mathbb{S}^n$ with canonical metrics attracted a lot
of interest due to its connection to Spectral Geometry. As it was proved by Nadirashvili~\cite{Nadirashvili1996},
El Soufi and Ilias~\cite{ElSoufiIlias2008}, the critical metrics for eigenvalues of the Laplace-Beltrami operator correspond
exactly to the metrics induced on minimally immersed submanifolds of spheres. A lot of progress in the problem of sharp
isoperimetric inequalities for eigenvalues of the Laplace-Bletrami operator on surfaces was achieved due to this connection,
see e.g.~\cite{Nadirashvili1996, JakobsonNadirashviliPolterovich2006, ElSoufiGiacominiJazar2006, CianciKarpukhinMedvedev2019, JakobsonLevitinNadirashviliNigamPolterovich2005, NayataniShoda2019,KarpukhinNadirashviliPenskoiPolterovich2021,NadirashviliPenskoi2018, Karpukhin2021}. 
More about critical metrics on surfaces and minimal immesrions in $\mathbb{S}^n$ could be found in 
reviews~\cite{Penskoi2013b,Penskoi2019}.
  
Examples of  implicit minimal submanifolds are known from the paper \cite{Cartan1939} by Cartan, where families of minimal and constant mean curvature surfaces were constructed. Baird and Gudmundsson  in the paper \cite{MR1190447}  found a criterion for minimality of $f^{-1}(y)$ where the map $f: (M, g) \rightarrow (N, h)$  is horizontally conformal up to the first order along $f^{-1}(y)$ (see Definition  \hyperlink{def2}{2}). Later in \cite{gudmundsson2023minimal} Gudmundsson and Munn  distinguished a class of such maps, called complex-valued eigenfunctions or $(\lambda, \mu)$-eigenfunctions. 

Let $(M, g)$ be a Riemannian manifold,  let $T^{\mathbb{C}}M$ be  the complexification of the $TM$ and $g$ be the extension of the metric  to a complex bilinear form on $T^{\mathbb{C}}M$. We introduce \textit{Laplace-Beltrami operator} $\tau$ and bilinear \textit{conformality operator} $\varkappa$, that act on complex-valued functions $\varphi, \psi: (M, g) \rightarrow \mathbb{C}$  as follows,
$$\tau(\varphi) := \dive \grad \varphi, \quad \varkappa(\varphi, \psi) := g(\grad \varphi, \grad \psi).$$ 

  \begin{def1}[\cite{gudmundsson2023minimal}]

   A function $f: (M, g) \rightarrow \mathbb{C}$ is called a \textit{complex-valued eigenfunction} or a \textit{$(\lambda, \mu)$-eigenfunction} if there exist $\lambda, \, \mu \in \mathbb{C}$ such that
$$ \tau(f) = \lambda f, \quad \varkappa(f, f) = \mu f^2.$$

    A set $\mathcal{E} = \{ f_i : M \rightarrow \mathbb{C} \, | \, i \in I \}$ of complex-valued functions is said to be an \textit{eigenfamily} on $M$ if there exist complex numbers $\lambda, \, \mu \in \mathbb{C}$ such that for all $f_i, \, f_j \in \mathcal{E}$ we have
    $$ \tau(f_i) = \lambda f_i, \quad \varkappa(f_i, f_j) = \mu f_i f_j.$$
  \end{def1}

  \begin{theoremf}[\cite{gudmundsson2023minimal}]
    \label{GM_th}
 Let $f: (M, g) \rightarrow \mathbb{C}$ be a complex-valued eigenfunction on a Riemannian manifold, such that $0 \in f(M)$ is a regular value for $f$. Then the fiber $f^{-1}(0)$ is a minimal submanifold of $M$ of codimension two. 
      
  \end{theoremf}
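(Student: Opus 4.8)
The plan is to split $f$ into its real and imaginary parts, reduce the minimality of the fiber to the vanishing of a Hessian trace, and extract the required Hessian identities by differentiating the quadratic eigenfunction equation and restricting to the fiber.

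First I would write $f = u + iv$ with $u,v \colon M \to \mathbb{R}$, and put $\lambda = a+ib$, $\mu = c+id$. Since $0$ is a regular value, $\d f_p$ is surjective onto $\mathbb{C}\cong\mathbb{R}^2$ for every $p\in N:=f^{-1}(0)$, so $\d u_p,\d v_p$ are linearly independent and $N$ is a submanifold of codimension two by the regular value theorem. Separating the two eigenfunction equations into real and imaginary parts expresses $\Delta u,\Delta v$ as linear combinations of $u,v$ (so both vanish on $N$) and, from $g(\grad f,\grad f)=\mu f^2$, yields
\begin{align}
g(\grad u,\grad u)-g(\grad v,\grad v) &= c(u^2-v^2)-2duv,\\
2\,g(\grad u,\grad v) &= d(u^2-v^2)+2cuv.
\end{align}
Evaluating these on $N$ (where $u=v=0$) shows that $\grad u$ and $\grad v$ are orthogonal and of equal length $\rho:=|\grad u|=|\grad v|>0$ there, the positivity coming from regularity. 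Thus $\nu_1=\grad u/\rho$ and $\nu_2=\grad v/\rho$ form an orthonormal frame of the normal bundle of $N$.

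Next I would reduce minimality to a trace computation. For $X,Y$ tangent to $N$ one has $g(\grad u,Y)=0$ along $N$, and differentiating along $X$ gives the standard identity $g(\mathrm{II}(X,Y),\grad u)=-\Hess u(X,Y)$, with the same for $v$. Tracing over an orthonormal basis of $T_pN$, splitting off the two normal directions, and using that $\Delta u = 0$ on $N$ forces $\operatorname{tr}\Hess u = 0$ there, I obtain
$$g(H,\nu_1) = -\frac{1}{\rho}\operatorname{tr}_{T_pN}\Hess u = \frac{1}{\rho^3}\bigl[\Hess u(\grad u,\grad u)+\Hess u(\grad v,\grad v)\bigr],$$
and the analogous expression for $g(H,\nu_2)$ with $u$ replaced by $v$. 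It then remains to show these bracketed sums vanish on $N$.

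The key step is to differentiate the gradient equation. Using the $\mathbb{C}$-bilinear extension of $g$ and $\Hess f:=\Hess u+i\Hess v$, the identity $g(\grad f,\grad f)=\mu f^2$ gives, for every vector $X$,
$$\Hess f(X,\grad f)=\mu f\,\d f(X),$$
which vanishes on $N$. Taking $X=\grad u$ and $X=\grad v$ and setting $P=\Hess f(\grad u,\grad u)$, $Q=\Hess f(\grad u,\grad v)$, $R=\Hess f(\grad v,\grad v)$, the two resulting relations read $P+iQ=0$ and $Q+iR=0$, whence $P+R=-iQ+iQ=0$. Taking real and imaginary parts of $P+R=0$ gives precisely the vanishing of the two bracketed sums above, so $H=0$ and $N$ is minimal. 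The part requiring the most care is the geometric bookkeeping: justifying the second-fundamental-form identity in codimension two (where $\grad u$ is only one of the two normal directions), cleanly separating the tangential trace from the normal Hessian contributions, and confirming that every $u$- and $v$-dependent term produced by differentiating the quadratic right-hand sides indeed drops out on $N$. Once $\Hess f(X,\grad f)=0$ on $N$ is established, the final cancellation $P+R=0$ is immediate.
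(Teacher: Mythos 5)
Your proposal is correct, but there is nothing in this paper to compare it against: Theorem 1 is quoted from \cite{gudmundsson2023minimal} without proof, and the machinery the paper itself develops (Proposition 1, Lemma 1, Theorem 3) serves other statements. So the relevant comparison is with the route taken in the cited literature, where minimality of the fiber is obtained through the Baird--Gudmundsson theory of horizontally conformal maps: one checks that a $(\lambda,\mu)$-eigenfunction is horizontally conformal up to first order along $f^{-1}(0)$ and invokes \cite[Theorem 3.3]{MR1190447} --- the very theorem this paper uses in Section 4 to prove Theorem 3. Your argument bypasses that machinery entirely and is sound at every step: splitting $f=u+iv$, the two eigenfunction identities restricted to $N=f^{-1}(0)$ give orthogonality and equal length of $\grad u,\grad v$, and regularity gives $\rho=|\grad u|>0$, so $\grad u/\rho,\grad v/\rho$ is an orthonormal normal frame; the identity $g(\mathrm{II}(X,Y),\grad u)=-\Hess u(X,Y)$ holds in any codimension for a normal direction of the form $\grad u$, and together with $\Delta u=\Delta v=0$ on $N$ (valid under either sign convention for $\Delta$) it reduces minimality exactly to the vanishing of $\Hess u(\grad u,\grad u)+\Hess u(\grad v,\grad v)$ and its $v$-analogue; finally, differentiating $g(\grad f,\grad f)=\mu f^2$ gives $\Hess f(X,\grad f)=\mu f\,\d f(X)$ for all $X$, which vanishes on $N$, and the algebra $P+iQ=0$, $Q+iR=0$ $\Rightarrow$ $P+R=0$ delivers precisely those two vanishings. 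What your approach buys is a short, self-contained, elementary proof of this particular theorem; what the horizontal-conformality framework buys is generality --- it also governs preimages of curves rather than points (codimension one), which is exactly what Theorem 3 of this paper requires, so the paper could not have replaced its reliance on \cite{MR1190447} by your computation, though your computation is arguably the cleaner way to obtain Theorem 1 itself.
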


    In this paper we apply $(\lambda, \mu)$-eigenfunctions in order to find minimal submaifolds in spheres. Firstly, in Section 2 we propose a proof of \cite[Theorem 1]{riedler2023polynomial} in the particular case of  $(\lambda, \mu)$-eigenfunctions. Our proof is shorter and uses another approach than the original one.

    \begin{theoremf}
    \label{Sn_class}
        Let $n \geq 2$, then a function $f: \mathbb{S}^n \rightarrow \mathbb{C}$ is a $(\lambda, \mu)$-eigenfunction if and only if there exists a homogeneous polynomial $F: \mathbb{R}^{n+1} \rightarrow \mathbb{C}$ such that $f = F|_{\mathbb{S}^n}$ and the polynomial $F$ is a $(0, 0)$-eigenfunction.

    \end{theoremf}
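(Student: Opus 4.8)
The plan is to reduce the statement to the classical theory of spherical harmonics together with one purely algebraic observation. Throughout, all inner products and the quadratic form $g(\grad f,\grad f)$ are understood as the $\mathbb{C}$-bilinear extensions, with no conjugation, exactly as in the paper's definition; this is what allows $g(\grad f,\grad f)$ to be a nonzero multiple of $f^2$ for nonconstant $f$. The first step is to record the bridge between the Euclidean and spherical operators. Writing $r=|x|$ and using $\Delta_{\mathbb{R}^{n+1}}=\partial_r^2+\tfrac{n}{r}\partial_r+\tfrac{1}{r^2}\Delta_{\mathbb{S}^n}$, a homogeneous polynomial $P$ of degree $k$ with $f=P|_{\mathbb{S}^n}$ satisfies $\Delta_{\mathbb{R}^{n+1}}P=r^{k-2}\bigl(\Delta_{\mathbb{S}^n}f+k(k+n-1)f\bigr)$; in particular $\Delta P=0$ forces $\Delta_{\mathbb{S}^n}f=-k(k+n-1)f$. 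Likewise, projecting the ambient gradient onto $T\mathbb{S}^n$ and using Euler's identity $\langle\nabla P,x\rangle=kP$ gives, on the sphere, $g(\grad f,\grad f)=\langle\nabla P,\nabla P\rangle-k^2f^2$.

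With these identities the easy direction is immediate. If $\Delta P=0$ and $\Delta(P^2)=0$, then the product rule $\Delta(P^2)=2\langle\nabla P,\nabla P\rangle+2P\,\Delta P$ gives $\langle\nabla P,\nabla P\rangle\equiv0$, whence $\Delta_{\mathbb{S}^n}f=-k(k+n-1)f$ and $g(\grad f,\grad f)=-k^2f^2$; so $f$ is a $(\lambda,\mu)$-eigenfunction with $\lambda=-k(k+n-1)$ and $\mu=-k^2$.

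For the converse I would use the spectral theory of $\Delta_{\mathbb{S}^n}$: since this operator is self-adjoint on $L^2(\mathbb{S}^n,\mathbb{C})$, any nonzero eigenfunction $f$ has a real eigenvalue $\lambda=-k(k+n-1)$ and equals $H|_{\mathbb{S}^n}$ for a unique harmonic homogeneous $H$ of degree $k$. I then take $P=H$, so that $\Delta P=0$ holds automatically, and it remains only to prove $\Delta(H^2)=0$. Feeding $f=H|_{\mathbb{S}^n}$ into the gradient identity and the defining relation $g(\grad f,\grad f)=\mu f^2$, and using $\langle\nabla H,\nabla H\rangle=\tfrac12\Delta(H^2)$ (because $H$ is harmonic), one obtains $\tfrac12\Delta(H^2)=(\mu+k^2)H^2$ on $\mathbb{S}^n$. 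Since the two sides are homogeneous of degrees $2k-2$ and $2k$ and agree on the sphere, homogeneous extension upgrades this to the polynomial identity $r^2\,\Delta(H^2)=2(\mu+k^2)H^2$ on all of $\mathbb{R}^{n+1}$.

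The crux, and the one place the hypothesis $n\ge2$ enters, is extracting $\Delta(H^2)=0$ from this identity. If $\Delta(H^2)\neq0$ then $\mu+k^2\neq0$, and the identity shows that $r^2=\sum_{i=1}^{n+1}x_i^2$ divides $H^2$. For $n\ge2$ the quadric $\sum_{i=1}^{n+1}x_i^2$ is irreducible over $\mathbb{C}$, so in the UFD $\mathbb{C}[x_1,\dots,x_{n+1}]$ it would have to divide $H$ itself; but then $H\in r^2\mathcal{P}_{k-2}$, contradicting the direct-sum decomposition $\mathcal{P}_k=\mathcal{H}_k\oplus r^2\mathcal{P}_{k-2}$ of homogeneous polynomials into a harmonic part and an $r^2$-part, which forces a nonzero harmonic polynomial to have trivial $r^2$-component. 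Hence $\Delta(H^2)=0$, so $P=H$ satisfies both $\Delta P=0$ and $\Delta(P^2)=0$, completing the argument. I expect this algebraic step to be the main obstacle, since it is precisely where the case $n=1$ must be excluded: on the circle $x_1^2+x_2^2=(x_1+ix_2)(x_1-ix_2)$ factors, the irreducibility argument breaks down, and the conclusion can genuinely fail.
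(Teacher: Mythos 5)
Your proof is correct, and it reaches the theorem by a route that is related to, but structurally different from, the paper's. The paper first establishes (its Proposition 1) that $f$ is a $(\lambda,\mu)$-eigenfunction if and only if $f$ \emph{and} $f^2$ are both eigenfunctions of the Laplace--Beltrami operator; its necessity argument then represents both $f$ and $f^2$ by harmonic homogeneous polynomials $P$ and $P_2$, compares them through the relation $P^2 = r^{2k}P_2$, and rules out $k \neq 0$ by combining the irreducibility of $r^2$ for $n \geq 2$ with its Lemma 1 (a nonzero harmonic homogeneous polynomial is not divisible by $r^2$, proved there by a direct computation with the Euler field). You bypass Proposition 1 entirely in the hard direction: you invoke the spherical-harmonics description only once, for $f$ itself, and feed the gradient condition $g(\grad f,\grad f)=\mu f^2$ directly into a homogeneity argument, upgrading it to the polynomial identity $r^2\,\Delta(H^2) = 2(\mu+k^2)H^2$; the same two ingredients then finish the proof --- irreducibility of $r^2$ for $n\geq 2$, and the non-divisibility of harmonic polynomials by $r^2$, which you derive from the classical decomposition $\mathcal{P}_k = \mathcal{H}_k \oplus r^2\mathcal{P}_{k-2}$ instead of the paper's computational lemma (the two facts are essentially equivalent). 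Your route buys explicit constants $\lambda=-k(k+n-1)$, $\mu=-k^2$ and avoids having to produce a second harmonic representative for $f^2$; the paper's route buys the reformulation of Proposition 1, which holds on an arbitrary Riemannian manifold and is the advertised centerpiece of its argument. Reassuringly, both proofs break at exactly the same place when $n=1$, namely the factorization $x_1^2+x_2^2=(x_1+ix_2)(x_1-ix_2)$, which you correctly single out.
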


 In Section 3 some examples of $(\lambda, \mu)$-eigenfunctions on $\mathbb{S}^4$ are presented. However, these functions give only trivial examples of minimal submanifolds in sphere, namely flat sections of a sphere. Also it is shown that in fact,  the minimal submanifolds corresponding to polynomials of degree less or equal to 3  are trivial. 

 In Section 4 we suggest a way to construct minimal submanifolds of codimension 1 in spheres with help of $(\lambda, \mu)$-eigenfunctions.  We should remark  that $(0, 0)$-eigenfunctions were implicitly used to construct minimal submanifolds of codimension 1 in $\mathbb{R}^n$ \cite[Example 3.5]{MR1190447}. But to the best of the author knowledge minimal submanifolds of codimension 1 were not described using  $(\lambda, \mu)$-eigenfunctions in other manifolds except $\mathbb{R}^n$.  In this paper we propose a way of constructing such manifolds in $\mathbb{S}^n$.
 
 For every $\alpha \in \mathbb{C}$ such that $|\alpha| = 1$  we denote  by $l_{\alpha}$ the real straight line in $\mathbb{C}$ given by the equation $a \, u + b \, v = 0$, where $a := \Re \alpha, \, \, b := \Im \alpha$.  Then the following Theorem is proved.

\begin{theoremf}
\label{Sn_th}
        Let  $f: \mathbb{S}^n \rightarrow \mathbb{C}$ be a $(\lambda, \mu)$-eigenfunction given by a polynomial $F$. Suppose that $0$ is a regular value of $f$. Define $ P_{\alpha} := a \Re F + b \Im F$, then $f^{-1}(l_\alpha)$ is minimal in $\mathbb{S}^n$ if and only if for all $x \in P^{-1}_{\alpha}(0)$ we have
\begin{equation} \label{cond}
\Hess P_\alpha (x) \Big (\grad P_\alpha (x), \grad P_\alpha (x) \Big) = 0.
\end{equation}

\end{theoremf}
In the case of $\mathbb{S}^3$ we can simplify  condition (\ref{cond}) and describe how the submanifold $f^{-1}(l_\alpha)$ depends on $\alpha$.

\begin{theoremf}
\label{S3_th}

        Let  $f: \mathbb{S}^3 \rightarrow \mathbb{C}$ be a $(\lambda, \mu)$-eigenfunction given by a polynomial $F$. Suppose that $0$ is a regular value of $f$. Then 
        \begin{enumerate}
            \item for all $\alpha, \beta$ there exists an isometry of $\mathbb{R}^4$ that induces  an isometry between $f^{-1}(l_\alpha)$ and $f^{-1}(l_\beta)$;
            \item the submanifold $f^{-1}(l_\alpha)$ is minimal in $\mathbb{S}^3$ for all $\alpha$ if and only if for some $\alpha_0$ condition (\ref{cond}) holds.
       
        \end{enumerate}

\end{theoremf}

The second statement of Theorem \ref{S3_th} shows that it is  enough to check condition~(\ref{cond}) only for $P_1 = \Re F$ or $P_i = \Im F$. Theorem \ref{S3_th} allows to describe immersions of the Clifford  torus and the  Lawson $\tau_{n, m}$ surfaces  using  $(\lambda, \mu)$-eigenfunction. Looking for other examples is an interesting problem for future  investigation.

\section{Complex-valued eigenfunctions on sphere  }

\subsection{Auxiliary proposition}
\label{propos}
In order to prove Theorem \ref{Sn_class} we need to describe $(\lambda, \mu)$-eigenfunctions in terms of eigenfunctions of the Laplace–Beltrami operator. There is a well known identity for $\tau$,
\begin{equation} \label{id}
\tau(\varphi \psi) = \tau(\varphi) \psi + 2 \varkappa(\varphi, \psi) + \varphi \tau(\psi).
\end{equation}

\begin{propositionf}

    Let $(M, g)$ be a Riemannian manifold and let $f$ be a function on it. Then the following conditions are equivalent.

     \begin{enumerate}[1)]
        \item A function $f$ is a $(\lambda, \mu)$-eigenfunction.
        \item Functions $f$ and $f^2$ are eigenfunctions of the operator $\tau$.
        \item Functions $f^n$, $ n \in \mathbb{N} $ are eigenfunctions of the operator $\tau$.
    \end{enumerate}
\end{propositionf}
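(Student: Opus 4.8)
The plan is to prove the chain of implications $(1) \Rightarrow (2) \Rightarrow (3) \Rightarrow (1)$, with the key tool being the product identity \eqref{id} for $\tau$. The implications $(3) \Rightarrow (2)$ and $(2) \Rightarrow (1)$ in the reverse reading are nearly immediate, so the real content lies in going from the $(\lambda,\mu)$-eigenfunction condition to statements about powers of $f$.

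First I would establish $(1) \Rightarrow (2)$. By hypothesis $\tau(f) = \lambda f$ and $\varkappa(f,f) = \mu f^2$. Applying \eqref{id} with $\varphi = \psi = f$ gives
$$\tau(f^2) = \tau(f)\, f + 2\varkappa(f,f) + f\, \tau(f) = 2\lambda f^2 + 2\mu f^2 = (2\lambda + 2\mu) f^2,$$
so $f^2$ is an eigenfunction of $\tau$ with eigenvalue $2\lambda + 2\mu$, and $f$ is an eigenfunction by assumption. This establishes $(2)$.

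Next I would prove $(2) \Rightarrow (3)$ by induction on $n$, which I expect to be the main technical step. Suppose $\tau(f) = \lambda f$ and $\tau(f^2) = \nu f^2$ for some $\nu$; note that combining these with \eqref{id} recovers $\varkappa(f,f) = \tfrac{1}{2}(\nu - 2\lambda) f^2$, so $f$ is already a $(\lambda,\mu)$-eigenfunction with $\mu = \tfrac{1}{2}(\nu - 2\lambda)$. The inductive claim is that $\tau(f^n) = c_n f^n$ for constants $c_n$. Assuming the result for exponents up to $n$, I would apply \eqref{id} to $\varphi = f^{n}$ and $\psi = f$, which requires knowing $\varkappa(f^n, f)$. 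The cross term is handled by the general computation $\varkappa(f^k, f^m) = km\, f^{k+m-2}\varkappa(f,f)$, which follows from the Leibniz rule $\grad(f^k) = k f^{k-1}\grad f$; substituting $\varkappa(f,f) = \mu f^2$ then yields $\varkappa(f^n, f) = n\mu f^{n+1}$. Feeding this into the identity gives
$$\tau(f^{n+1}) = \tau(f^n)\, f + 2\varkappa(f^n, f) + f^n\, \tau(f) = (c_n + 2n\mu + \lambda) f^{n+1},$$
so $c_{n+1} = c_n + 2n\mu + \lambda$, closing the induction and proving $(3)$.

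Finally $(3) \Rightarrow (1)$ is immediate, since $(3)$ includes the cases $n = 1$ and $n = 2$, which by the reverse of the first computation force $\varkappa(f,f)$ to be a constant multiple of $f^2$; concretely, from $\tau(f) = \lambda f$ and $\tau(f^2) = c_2 f^2$ together with \eqref{id} one extracts $\varkappa(f,f) = \tfrac{1}{2}(c_2 - 2\lambda) f^2$, exhibiting $f$ as a $(\lambda,\mu)$-eigenfunction. The main obstacle is organizing the inductive step cleanly, in particular justifying the Leibniz-type formula for $\varkappa(f^k, f^m)$; once that is in hand the rest is algebraic bookkeeping of the eigenvalue recursion.
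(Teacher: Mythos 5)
Your proof is correct, and it differs from the paper's in one substantive way. The equivalence of (1) and (2) is handled identically in both: substituting $\varphi = \psi = f$ into \eqref{id} lets one solve either for $\tau(f^2)$ (giving eigenvalue $2\lambda + 2\mu$, your direction $(1)\Rightarrow(2)$) or for $\varkappa(f,f)$ (giving $\mu = \tfrac{1}{2}(\nu - 2\lambda)$, the paper's direction $(2)\Rightarrow(1)$), and your step $(3)\Rightarrow(1)$ is the same one-liner as the paper's. The divergence is in reaching (3): the paper disposes of $(1)\Rightarrow(3)$ by citing \cite[Theorem 2.4]{gudmundsson2023minimal} applied to the singleton eigenfamily $\mathcal{E} = \{f\}$, whereas you prove it from scratch by induction, using the chain rule $\grad(f^k) = k f^{k-1}\grad f$ to establish $\varkappa(f^n, f) = n\mu f^{n+1}$ and then reading the eigenvalue recursion $c_{n+1} = c_n + 2n\mu + \lambda$ off \eqref{id}. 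Your induction is essentially the computation hidden inside the cited theorem, specialized to a one-element family, so the underlying mathematics coincides; what your version buys is self-containedness (the proposition no longer depends on an external result), at the cost of a few extra lines. As a sanity check, your recursion with $c_1 = \lambda$ solves to $c_n = n\lambda + n(n-1)\mu$, which is exactly the eigenvalue produced by the theorem the paper cites.
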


\begin{proof}
    Let us prove that 2) implies 1). Suppose that functions $f$ and $f^2$ are eigenfunctions with eigenvalues $\lambda_1$ and $\lambda_2$.  Substituting $\psi = f, \,  \varphi = f$ into \eqref{id}, we get

    $$\tau(f^2) =  \tau(f) f + 2 \varkappa(f, f) + f \tau(f). $$
    Then, we have

    $$\lambda_2 f^2 = 2 \lambda_1 f^2 + 2 \varkappa(f, f).$$
    So, a function $f$ is a $(\lambda, \mu)$-eigenfunction with $\lambda = \lambda_1$, $\mu = \frac{\lambda_2}{2} - \lambda_1$.

     Let us prove that 1) implies 3). By applying \cite[Theorem 2.4]{gudmundsson2023minimal} to the eigenfamily $ \mathcal{E} = \{ f \}$ we see that $f^n$ are complex-valued eigenfunctions. In particular, $f^n$ are eigenfunctions of the operator $\tau$.
     
     It is clear that 3) implies 2).

\end{proof}

\subsection{Proof of  Theorem \ref{Sn_class}}

\begin{proof}
    
Let us prove the necessity. Consider a   $(\lambda, \mu)$-eigenfunction $f$. Then by Proposition \ref{propos} functions $f$ and $f^2$ are eigenfunction for $\tau$. It follows from the description of eigenfunctions on sphere that there exist homogeneous harmonic polynomials $F, \, F_2$ such that $f = F|_{\mathbb{S}^n}$ and $f^2 = F_2|_{\mathbb{S}^n}$. So, $F^2|_{\mathbb{S}^n} = F_2|_{\mathbb{S}^n}$, polynomials $F^2$ and $F_2$ are homogeneous, hence $F^2 = F_2 r^{2k}$, where $r^2 = (x^1)^2 + ...+(x^{n+1})^2$. Suppose, that $k \neq 0$. The number $k$ is integer, because $F^2$ and $F_2$ are polynomials. If $n \geq 2$ then a polynomial $r^{2k}$ is irreducible. Then $F$ or $F_2$ is divisible by a polynomial $r^2$. This contradicts  \cite[Remark 2]{MR166304}, because $F$ and $F_2$ are harmonic. Hence, $k = 0$ and $F_2 = F^2$, so $F$ and $F^2$ are harmonic.  Finally, by Proposition \ref{propos} we have that $F$ is a $(0, 0)$-eigenfunction.

Let us prove the sufficiency.  By Proposition \ref{propos}  polynomial $F^2$ is harmonic, hence $f$ and $f^2$ are eigenfunctions for the operator $\tau$. Then by Proposition \ref{propos} a function $f$ is a $(\lambda, \mu)$-eigenfunction.

\end{proof}

\section{Application to  \texorpdfstring{$ \mathbb{S}^4 $}{S4} }

Theorem \ref{Sn_class} allows us to construct $(\lambda, \mu)$-eigenfunctions on $\mathbb{S}^n$. Then, minimal submanifolds of $\mathbb{S}^4$ could be obtained.

\begin{example}
    Consider a polynomials $F_k(x^1, x^2, x^3, x^4, x^5) = (x^1 + i x^2)^k + (x^3 + i x^4)^k$. By Theorems \ref{GM_th} and \ref{Sn_class} the submanifolds $F_k^{-1}(0) \cap \mathbb{S}^4 \setminus \{ (0, 0, 0, 0, \pm 1) \}$ are  minimal  in $\mathbb{S}^4$. But these submanifolds turn out to be quite trivial. Namely, these submanifolds  are unions of flat sections of $\mathbb{S}^4$ without two points.
\end{example}

\hypertarget{def2}{}
\begin{def1}[\cite{gudmundsson2023minimal}]
 Let $\varphi: (M, g) \rightarrow (N, h)$ be a smooth submersion between Riemannian manifolds.  Let the functions $k_1^2, ... , k_n^2$ denote the non-zero eigenvalues of the first fundamental form $\varphi^* h$ with respect to the metric $g$. Let $K$ be a submanifold of $M$. Then $\varphi$ is said to be \textit{horizontally conformal up to first order} along $K$ if for all $p \in K$ we have

\begin{enumerate}
    \item $k_1^2(p) = ... = k_n^2(p),$
    \item $\grad k_1^2(p) = ... = \grad k_n^2(p).$
\end{enumerate}
\end{def1}

It turns out that the following  proposition is true.

\begin{propositionf}
\label{propos_pol}
    
    Let $f$ be a $(\lambda, \mu)$-eigenfunction on $\mathbb{S}^4$ given by a polynomial $F$ of $\deg F \leq 3$, such that $0 \in f(\mathbb{S}^4)$ is a regular value.   Then $f^{-1}(0)$ is a  flat section of~$\mathbb{S}^4$.
\end{propositionf}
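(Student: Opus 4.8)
The plan is to begin with Theorem 2, which replaces $f$ by a defining polynomial: $f = F|_{\mathbb{S}^4}$ for a homogeneous $F : \mathbb{R}^5 \to \mathbb{C}$ of degree $d \le 3$ with $\Delta F = 0$ and $(\grad F, \grad F) = 0$, the latter being $\Delta F^2 = 0$ rewritten through \eqref{id}. Since $f^{-1}(0) = \{F = 0\} \cap \mathbb{S}^4$ and $F$ is homogeneous, everything is governed by the affine variety $\{F = 0\} \subset \mathbb{R}^5$. The crucial reduction is this: if $F$ factors into complex linear forms, $F = \prod_j \ell_j$ with $\ell_j = \alpha_j \cdot x$, then restricting $(\grad F, \grad F) = 0$ to $\{\ell_j = 0\}$, where $\grad F = \big(\prod_{k \ne j} \ell_k\big)\alpha_j$, forces $\alpha_j \cdot \alpha_j = 0$; writing $\alpha_j = u_j + i v_j$ this means $|u_j| = |v_j|$ and $u_j \perp v_j$, so $\{\ell_j = 0\} = \{u_j \cdot x = 0\} \cap \{v_j \cdot x = 0\}$ is a real codimension-two subspace and $\{\ell_j = 0\} \cap \mathbb{S}^4$ is a flat section. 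Hence it suffices to show that such an $F$ factors into linear forms over $\mathbb{C}$.

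I would handle the low degrees directly. For $d = 1$ the form $F = \alpha \cdot x$ already has $\alpha$ isotropic, and the fiber is a single great $\mathbb{S}^2$. For $d = 2$ write $F = x^{\top} A x$ with $A$ complex symmetric; then $(\grad F, \grad F) = 4\, x^{\top} A^2 x$ vanishes identically iff $A^2 = 0$, so $A$ is nilpotent with $\operatorname{tr} A = 0$ and $\operatorname{rank} A \le 2$, and a short rank argument gives $F = \ell_1 \ell_2$. Here the common zero locus of $\ell_1, \ell_2$ is cut out by four real linear equations in $\mathbb{R}^5$, hence is at least a line and always meets $\mathbb{S}^4$; at such points $\grad F = 0$, so $0$ is never a regular value for a genuine quadratic and this case is vacuous.

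For the cubic case I would first differentiate the two defining equations: $\Delta F = 0$ says $\operatorname{tr} \Hess F = 0$, and differentiating $(\grad F, \grad F) = 0$ gives $\Hess F \cdot \grad F = 0$, so $\grad F(x)$ lies in the kernel of the Hessian at every point. For a cubic $\Hess F$ is a matrix of linear forms, so these are rigid pointwise constraints; geometrically they say that the Gauss map of the cubic hypersurface $\{F = 0\}$ is everywhere degenerate, which forces the hypersurface to be ruled and, in this degree, reducible. Concretely I expect to show that after a rotation in $O(5)$ the fifth coordinate drops out and $F$ becomes a homogeneous polynomial in the two complex coordinates $z_1 = x^1 + i x^2$, $z_2 = x^3 + i x^4$: a real direction can enter $\grad F$ isotropically only if it is paired by the underlying complex structure with another, and the leftover odd coordinate cannot be so paired. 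Granting this, $F$ is a binary cubic in $z_1, z_2$ and factors into three complex linear forms by the fundamental theorem of algebra, so by the reduction above $\{F = 0\} \cap \mathbb{S}^4$ is a union of flat sections.

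The main obstacle is precisely this decoupling/factorization step for $d = 3$, and it is exactly where the hypothesis $\deg P \le 3$ is used, since in higher degree genuinely curved minimal fibers appear; turning the heuristic that ``the odd coordinate cannot contribute to a null gradient'' into a proof (rather than the ad hoc checks that $x^5$ drops out) is the delicate point. Once factorization is in hand, the regular-value hypothesis finishes the argument: the only singular points of the fiber are those where two factors vanish simultaneously, where $\grad F = 0$ (for instance the poles $(0,0,0,0,\pm 1)$ removed in the Example), and these are excluded because $0$ is a regular value; what remains is a union of flat sections, each a great $\mathbb{S}^2$.
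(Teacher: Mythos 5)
Your degree~$\le 2$ analysis is correct (and in fact shows more than you say: for $\deg P = 2$ the regular-value hypothesis can never hold, so that case is vacuous), and your reduction ``factorization into complex linear forms $\Rightarrow$ union of flat sections'' is fine --- indeed you do not even need isotropy of the $\alpha_j$ for flatness, since the real zero set of \emph{any} complex linear form on $\mathbb{R}^5$ is a linear subspace; isotropy only pins down its codimension. The genuine gap is exactly where the whole difficulty of the proposition sits: the cubic case. You say yourself that the decoupling/factorization step for $d = 3$ is ``the delicate point'' and offer only heuristics for it: that $\Hess F \cdot \grad F = 0$ makes the Gauss map degenerate, hence the hypersurface ruled, hence reducible, hence (after a rotation) a binary form in $z_1, z_2$. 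None of these implications is proved, and they are not routine: ``degenerate Gauss map $\Rightarrow$ reducible'' is false for hypersurfaces in general (a cone over an irreducible plane cubic is irreducible, ruled, and has degenerate Gauss map), so the specific identities $\operatorname{tr} \Hess F = 0$ and $\Hess F \cdot \grad F = 0$ would have to be exploited in an essential way, which is precisely what is missing. As written, the proposal establishes the proposition only for $\deg P \le 2$, where it is trivial or vacuous.

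The missing step is exactly what the paper outsources to the literature: it invokes the classification of horizontally conformal homogeneous polynomial maps of degree at most $3$ (\cite[Theorem 4.3]{MR1704994}) to conclude that after a change of coordinates $\bar P$ depends only on $\bar x^1, \dots, \bar x^4$. Note also that the paper's endgame differs from yours and avoids factorization altogether: having dropped a variable, it restricts to $\mathbb{S}^3 = \mathbb{S}^4 \cap \{ \bar x^5 = 0 \}$, where the fiber is a $1$-dimensional minimal submanifold, i.e.\ a union of geodesics (great circles), hence a plane section of $\mathbb{R}^4$; the cone structure of the homogeneous zero set then promotes this to a plane section of $\mathbb{R}^5$. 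If you want to keep your purely algebraic route, you must either prove the degree-$3$ classification yourself or cite it as the paper does; after that, the observation you almost make --- that a form in $z_1, z_2$ of degree $\ge 2$ has $\grad F = 0$ at the poles $(0,0,0,0,\pm 1) \in f^{-1}(0)$, so the regular-value hypothesis actually kills the cases $\deg P \in \{2, 3\}$ --- would let you finish immediately.
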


\begin{proof}
    Theorem \ref{Sn_class} implies that a polynomial $F$ is a $(0, 0)$-eigenfunction. According to \cite[Lemma 4.1]{gudmundsson2023minimal}, two eigenvalues $k^2_{1,2}$ of $F^*h$ satisfy
$$k_{1,2}^2 = \frac{1}{2}((|\grad u)|^2 + |\grad v|^2 \pm \mu (u^2 + v^2)).$$
Since $\mu = 0$, the polynomial $F$ is horizontally conformal up to first order everywhere in $\mathbb{R}^{n+1}$. Also the polynomial $F$ is homogeneous, then \cite[Theorem 4.3]{MR1704994} implies that there exists an affine change of coordinates in $\mathbb{R}^5$, such that in the new coordinates we have $\Bar{F} = \Bar{F}(\Bar{x}^1, \Bar{x}^2, \Bar{x}^3, \Bar{x}^4)$.   Consider $\mathbb{S}^3 = \mathbb{S}^4 \cap \{ \Bar{x}^5 = 0 \}$ and the $(\lambda, \mu)$-eigenfunction on it defined by $\Bar{F}$. So, $\Bar{F}^{-1}(0) \cap \mathbb{S}^3$ is a minimal submanifold of $\mathbb{S}^3$ of dimension 1. Hence, $\Bar{F}^{-1}(0) \cap \{ \Bar{x}^5 = 0 \}$ is a plane section of $\mathbb{R}^4 = \mathbb{R}^5 \cap \{ \Bar{x}^5 = 0 \}$. That means $\Bar{F}^{-1}(0)$ is a plane section of $\mathbb{R}^5$.
\end{proof}

Unfortunately there are no simple classification of horizontally
conformal polynomial maps of arbitrary degree. For more information see \cite[Section 5.2]{BW2003}.

\section{Minimal submanifolds of codimension 1 and complex-valued eigenfunctions}

In this Section we investigate using \cite[Theorem 3.3]{MR1190447} the question when preimage of a minimal submanifold of $\mathbb{C}$, i.e. a real straight line, is minimal. This allows us to construct minimal submanifolds of codimension 1.  

\begin{def1}[\cite{gudmundsson2023minimal}]
    Let $\varphi: (M, g) \rightarrow (N, h)$ be a smooth map between Riemannian manifolds. For a real number $p > 1$, we say that $\varphi$ is a \textit{p-harmonic map} if it is a critical point of the $p$-energy functional,
    $$E_p(\varphi) = \frac{1}{p} \int_M |d \varphi|^pdx.$$
\end{def1}

\begin{propositionf}[see e. g. \cite{gudmundsson2023minimal}]
    A smooth map $\varphi : (M, g) \rightarrow(N, h)$ between Riemannian manifolds is $p$-harmonic if and only if it satisfies the Euler-Lagrange equation for $E_p$,
    $$ \tau_p(\varphi) := |d\varphi|^{p-2}[\tau(\varphi) +d \varphi (\grad(log(|d\varphi|^{p-2})))] \equiv 0.$$
\end{propositionf}

\subsection{Proof of Theorem \ref{Sn_th}}

\begin{proof}
Firstly,  the polynomial $F$ is a $(0, 0)$-eigenfunction, hence $\tau(F) = 0$, so $F$ is 2-harmonic. Following the proof of Proposition \ref{propos_pol} we see, that $F$ is horizontally conformal up to first order everywhere in $\mathbb{R}^{n+1}$.  Secondly, using the formula
$$\grad^{\mathbb{R}^{n+1}} F(x)= \deg(F) \, F(x) \frac{x}{||x||^2}  + ||x||^{d-1} \grad^{\mathbb{S}^{n}} f \left ( \frac{x}{||x||}  \right )$$
we see that for all $x \in \mathbb{R}^{n+1} \setminus \{ 0 \} $ we have $\grad^{\mathbb{R}^{n+1}} F(x) \neq 0$, because $0$ is a regular value of $f$. Hence $F(x)$ is a submersion for   $x \in \mathbb{R}^{n+1} \setminus \{ 0 \} $. So, according to \cite[Theorem 3.3]{MR1190447} the submanifold $F^{-1}(l_\alpha) \setminus \{ 0 \}$ is minimal in $\mathbb{R}^{n+1}$ if and only if for all $x \in F^{-1}(l_\alpha) \setminus \{ 0 \}$ we have
$$    d  F_x( {\mathcal{H}_2} \grad  k^{-2}  ) = 0, $$
where $k^2 = k_1^2 = k_2^2$, $H_2 = (T_xF^{-1}(l_\alpha))^{\perp}$, so in our case $H_2 =  \Span(\grad P_{\alpha}(x))$ and $\mathcal{H}_2$ is the ortogonal projector on $H_2$. Now let us simplify this expression  using $k^2 =  \frac{1}{2}((|\grad u)|^2 + |\grad v|^2)$. Since $F$ is a $(0, 0)$-eigenfunction, we get $|\grad u| = |\grad v|$ and $\grad u$ is ortogonal to $\grad v$.  Then

$$|\grad P_{\alpha}|^2 = |a \grad u + b \grad v|^2 = (a^2 + b^2) |\grad u|^2 = (a^2 + b^2) k^2 = k^2.$$
 A straightforward calculation in standard coordinates in $\mathbb{R}^{n+1}$ at fixed point $x$ shows that 
$$\pm | {\mathcal{H}_2} \grad    k^{-2}   |=  
\left (\frac{\grad P_{\alpha}}{|\grad P_{\alpha}|} , \grad \left (  k^{-2}\right ) \right ) = $$
$$ = \left ( \frac{\grad P_{\alpha}}{|\grad P_{\alpha}|}, \grad(|\grad(P_{\alpha})|^{-2}) \right ) = \frac{1}{|\grad P_\alpha|} \delta^{ij} \frac{\partial P_\alpha}{\partial x^i}\frac{\partial }{\partial x^j} \left ( \delta^{kl} \frac{\partial P_\alpha}{\partial x^k} \frac{\partial P_\alpha}{\partial x^l} \right )^{-1} = $$
$$-\frac{1}{|\grad P_\alpha|^5} \delta^{ij} \delta^{kl} \left ( \frac{\partial^2 P_\alpha}{\partial x^j x^k} \frac{\partial P_\alpha}{\partial x^i} \frac{\partial P_\alpha}{\partial x^l} + \frac{\partial^2 P_\alpha}{\partial x^j x^l} \frac{\partial P_\alpha}{\partial x^i} \frac{\partial P_\alpha}{\partial x^k} \right ) =-2 \frac{ \Hess P_{\alpha} (\grad P_{\alpha}, \grad P_{\alpha})}{|\grad P_{\alpha}|^{5}}.$$

Also, remark that $ \mathcal{H}_2 \grad  k^{-2}$ is parallel to $\grad P_{\alpha}$. Hence, there exists a function $c(x)$ such that  $ {\mathcal{H}_2} \grad  k^{-2} = c(x) \grad P_{\alpha}$.  So, 
$$d  F_x( {\mathcal{H}_2} \grad k^{-2} ) =   c(x)(a |\grad u|^2, b |\grad v|^2).$$
Hence, $d  F_x({\mathcal{H}_2} \grad k^{-2} ) = 0$ if and only if $c(x) = 0$. So, $| {\mathcal{H}_2} \grad  k^{-2}   | = 0$ and $\Hess P_{\alpha} (\grad P_{\alpha}, \grad P_{\alpha}) = 0$. 

Finally, $F^{-1}(l_\alpha)$ is a cone over $F^{-1}(l_\alpha) \cap \mathbb{S}^n$, because $F$ is a homogeneous  polynomial. By Simons theorem \cite[Section 3.2]{MR1391729}, $F^{-1}(l_\alpha) \setminus B_{\varepsilon}(0)$ is minimal in $\mathbb{R}^{n+1}$ if and only if $F^{-1}(l_\alpha) \cap \mathbb{S}^n$ is minimal in $\mathbb{S}^n$. Here $B_\varepsilon(x)$ denotes the open ball of radius $\varepsilon$ with center $x$.

\end{proof}

\subsection{Proof of Theorem \ref{S3_th}}

\begin{proof}
    Let us prove the first statement. Since $F$ is a $(0, 0)$-eigenfunction, by \cite[Theorem 4]{riedler2023polynomial} there exists an orthogonal  change of coordinates in $\mathbb{R}^4$, such that in the new coordinates $F = F(\Tilde{z}^1, \Tilde{z}^2)$.  Since $f^{-1}(l_\alpha) = P^{-1}_\alpha(0) \cap \mathbb{S}^3$, it is enough to show that  submanifolds $P^{-1}_\alpha(0)$ and $ P^{-1}_\beta(0)$ are isometric. Let us consider $\mathbb{R}^4$ as $\mathbb{C}^2$, then the isometry  $G: \mathbb{C}^2 \rightarrow \mathbb{C}^2$ is given by
    $$G(z^1, z^2) := (\overline{\beta \alpha^{-1}})^{\frac{1}{\deg F}}(z^1, z^2).$$
A straightforward calculation shows that
\[ 
\begin{array}{l}
\vspace{2mm}
P_\alpha(G(z^1, z^2)) = a \Re(F(G(z^1, z^2))) + b \Im(F(G(z^1, z^2)))  \\
\vspace{2mm}
 \quad \quad \quad \quad \quad \quad = a \Re \left( \overline{\beta \alpha^{-1}} F \right ) + b \Im \left ( \overline{\beta \alpha^{-1}} F \right )  \\
 \vspace{2mm}
   \quad  \quad \quad \quad \quad \quad = \left [ a \Re \left ( \beta \alpha^{-1} \right ) -  b \Im \left ( \beta \alpha^{-1} \right ) \right] \Re F   \\
   \vspace{2mm}
   \quad \quad \quad \quad \quad \quad \quad + \left[ b \Re \left ( \beta \alpha^{-1} \right ) + a \Im \left ( \beta \alpha^{-1} \right ) \right] \Im F  \\

   \quad \quad \quad \quad \quad \quad = P_\beta(z^1, z^2).
  \end{array} 
\]
Hence, the map $G$ gives an isometry between  $P^{-1}_\alpha(0)$ and $ P^{-1}_\beta(0)$.

Now, let us prove the second statement. By Theorem \ref{Sn_th} submanifold $f^{-1}(l_{\alpha_0})$ is minimal in $\mathbb{S}^3$ if and only if for all $x \in P^{-1}_{\alpha_0}(0)$ we have $\Hess P_{\alpha_0}(\grad P_{\alpha_0} , \grad P_{\alpha_0})  = 0$.  But the submanifolds $f^{-1}(l_\alpha)$ are isometric, so submanifold $f^{-1}(l_{\alpha_0})$ is minimal in $\mathbb{S}^3$ for some $\alpha_0$ if and only if $f^{-1}(l_\alpha)$ is minimal in $\mathbb{S}^3$ for every $\alpha$. 
\end{proof}

\subsection{Examples in $\mathbb{S}^3$}

$ $  \par Now let us construct several examples of minimal submanifolds in $\mathbb{S}^3$ with the help of  $(\lambda, \mu)$-eigenfunctions. Let $z^1 = x^1 + i x^2$ and $z^2 = x^3 + i x^4$.

\begin{remark}
\label{r_cr}
    In some cases $0 $ is not a regular value of an $(\lambda, \mu)$-eigenfunction $f$. However it is easy to check that Theorem \ref{Sn_th} and Theorem \ref{S3_th} hold for $f^{-1}(l_\alpha) \setminus~ Cr(f)$, where $Cr(f)$ is a set of critical points of a function $f$.
\end{remark}

\begin{example}[The Clifford  torus]
     Consider a  $(\lambda, \mu)$-eigenfunction $f$ on $\mathbb{S}^3$ given by a polynomial $F = (z^1)^2 + (z^2)^2$. It is easy to compute that $\Hess P_1 (\grad P_1, \grad P_1) = 8 P_1$. Hence, by Theorem \ref{S3_th} the submanifold $F^{-1}(l_\alpha) \cap \mathbb{S}^3$ is minimal in $\mathbb{S}^3$ for every~$\alpha$. Consider $\alpha = 1$ and $\alpha = i$,
$$P_1 = (x^1)^2 - (x^2)^2 + (x^3)^2 - (x^4)^2, \quad P_i = 2 x^1 x^2 + 2 x^3 x^4 .$$
So, $P_1^{-1}(0) \cap \mathbb{S}^3$ and $P_i^{-1}(0) \cap \mathbb{S}^3$ are minimal submanifolds of $\mathbb{S}^3$. It is easy to check that  $P_1^{-1}(0) \cap \mathbb{S}^3$ corresponds to  the Clifford  torus immersed by first  eigenfunctions and $P_i^{-1}(0) \cap \mathbb{S}^3$ corresponds to  the Clifford  torus immersed by second  eigenfunctions. 

Also remark that  the first statement of Theorem \ref{S3_th} implies that there exists an isometry of $\mathbb{S}^3$ that induces an isometry between these tori. In this case the isometry is given by $G(z^1, z^2) = \frac{1 - i}{\sqrt{2}}(z^1, z^2)$.
\end{example}

\begin{example}[Lawson $\tau_{n, m}$ surfaces]
    
Consider a  $(\lambda, \mu)$-eigenfunction $f$ on $\mathbb{S}^3$ given by a polynomial $F = (z^1)^n (z^2)^m$. This example is based on \cite[Example 3.5]{MR1190447}. One can apply the Simons theorem \cite[Section 3.2]{MR1391729} directly to it and see that $f^{-1}(l_\alpha)$ is minimal in $\mathbb{S}^3$ for every $\alpha$, but we show how to do it using Theorem \ref{S3_th}. Consider the  coordinates $(r^1, r^2, \varphi^1, \varphi^2)$ in $\mathbb{R}^4$ defined by
    $$(x^1, x^2, x^3, x^4) = (r^1 cos\varphi^1, \, r^1 sin\varphi^1, \, r^2 cos\varphi^2, r^2 sin\varphi^2).$$
Write $P_1 = \Re F$ in these coordinates,
$$P_1 = \Re[(r^1)^n (r^2)^m(cos(n \varphi^1) + i sin(n \varphi^1)) (cos(m \varphi^2) + i sin(m \varphi^2))] =$$
$$= (r^1)^n (r^2)^m cos(n \varphi^1 + m \varphi^2) =(r^1)^n (r^2)^m cos\varphi, $$
where $\varphi = \varphi(\varphi^1, \varphi^2) := n \varphi^1 + m \varphi^2$. Then calculations show that
$$\grad P_1 = (r^1)^{n} (r^2)^{m} \left (\frac{ n \, cos(\varphi)}{r^1}, \frac{m \,cos(\varphi)}{r^2}, -\frac{ n \, sin(\varphi)}{(r^1)^2}, -\frac{m \,sin(\varphi)}{ (r^2)^2} \right ),$$
\begin{multline*}
\Hess P_1(\grad P_1, \grad P_1) = \\ =(r^1)^{3n -4} (r^2)^{3m - 4}  cos\varphi \, \Big [ m^3(m-1)(r^1)^4  + 2 m^2 n^2 (r^1r^2)^2 + n^3(n - 1)(r^2)^4   \Big ].
\end{multline*}

So for all $x \in P_1^{-1}(0) $ we have $\Hess P_1(\grad P_1, \grad P_1) = 0$. By Theorem \ref{S3_th} and Remark \ref{r_cr} the submanifold $f^{-1}(l_\alpha) \setminus (\{ z^1 = 0 \} \cup \{ z^2 = 0\} )$ is minimal in $\mathbb{S}^3$ for all~$\alpha$. This family contains all  Lawson $\tau_{n, m}$ surfaces. In order to  get the canonical equation for a Lawson surface, suppose that $\alpha = i$. So, after the reflection with respect to the plane $Ox^1x^2x^3$, we get $\Im((z^1)^n (\Bar{z^2})^m) =0$. Hence, by the Lawson classification \cite[Theorem 3]{MR270280} we have the following list.

  \begin{enumerate}
        \item If $m = 0 $ or $n = 0$, this is $\mathbb{S}^2$.
        \item If $m, \, n \in \mathbb{N}$ and $2 \nmid m n$, this is  $\mathbb{T}^2$.
        \item  If $m, \, n \in \mathbb{N}$ and $2 \, |  \, m n$, it is  ${\mathbb{K}}^2$, i.e. the Klein bottle.
    \end{enumerate}

\end{example}

\section*{Acknowledgements}
The work was supported by the Theoretical Physics and Mathematics
Advancement Foundation «BASIS» grant Leader (Math) 21-7-1-45-6 and Stipend (Student) 24-8-2-19-1.

The author would like to thank A. V. Penskoi for attracting attention to this problem and valuable discussions.

\bibliographystyle{alpha}
\bibliography{bibl.bib}

\end{document}